\newtheorem{theorem}{Theorem}
\newtheorem{remark}{Remark}
\newtheorem{proposition}{Proposition}
\newtheorem{lemma}{Lemma}
\newtheorem{definition}{Definition}
\newtheorem{claim}{Claim}
\newtheorem{example}{Example}
\newtheorem*{claim*}{Claim}
\def\ie{{\em i.e.,\ }}
\newfont\bbf{msbm10 at 12pt}
\def\R{{\mathbb R}}
\def\N{{\mathbb N}}
\def\B{{\mathcal B}}
\def\D{{\mathcal D}}
\def\M{{\mathcal M}}
\def\A{{\mathcal A}}
\def\B{{\mathcal B}}
\def\Cr{\mbox{Cr}\,}
\def\M{\mbox{M}\,}
\def\Ex{\mbox{Ex}\,}
\def\diam{\mbox{\rm diam} }
\def\card{\mbox{\rm card} }
\def\deg{\mbox{\rm deg} }
\def\le{\leqslant}
\def\ge{\geqslant}
\def\htop{h_{top}}
\def\1{ {\hbox{{\it 1}} \!\! I} }
\begin{document}

\title[Entropy of Banach spaces]
{The topological entropy of Banach spaces}
\author{Jozef Bobok and Henk Bruin}
\date{\today}
\thanks{The first author was partly supported by the Grant Agency of the Czech Republic contract number
201/09/0854.
The second author would like to thank the University of
Surrey Faculty Research Support Fund (FRFS). Both
authors gratefully acknowledge the support of the MYES of the
Czech Republic via contract MSM 6840770010.}
\address{KM FSv \v CVUT, Th{\'a}kurova 7,
166 29 Praha 6,
Czech Republic }
\email{bobok@mat.fsv.cvut.cz}
\urladdr{http://mat.fsv.cvut.cz/bobok/}
\address{Department of Mathematics,
University of Surrey,
Guildford GU2 7XH, UK }
\email{h.bruin@surrey.ac.uk}
\urladdr{http://personal.maths.surrey.ac.uk/st/H.Bruin/}
\subjclass[2000]{37E05, 37B40, 46B25}
\keywords{Banach space, universal Banach space, topological entropy, horseshoe}

\begin{abstract}
We investigate some properties of (universal) Banach spaces of real functions
%of a real variable
in the context of topological entropy.
Among other things, we show that any subspace of $C([0,1])$ which is isometrically isomorphic to $\ell_1$ contains a functions with infinite
topological entropy. Also, for any $t \in [0, \infty]$, we construct a
(one-dimensional) Banach space in which any nonzero function has
topological entropy equal to $t$.
\end{abstract}

\maketitle

\section{Introduction}\label{sec:intro}

Let $C([0,1])$ denote the set of all continuous functions
$f: [0,1] \to \R$ equipped with the supremum norm. A theorem of Banach and Mazur \cite{Ba32} states that the Banach space $C([0,1])$ is universal, \ie every real, separable Banach space $X$ is isometrically isomorphic to a closed subspace of $C([0, 1])$. It is known that one can require more properties of the functions of $C([0,1])$ in the image of $X$:  a universal space containing only the zero function and nowhere differentiable functions \cite{Ro95}, resp.\ consisting of the zero function and nowhere approximatively differentiable and nowhere H\"{o}lder functions \cite{He00} has been proved. On the other hand, no universal space can consist of functions of bounded variation \cite{LM40} and every
isometrically isomorphic copy of $\ell_1$
(\ie the space of sequences with $1$-norm) in $C([0,1])$
contains a function which is non-differentiable at every point of a perfect subset of $[0,1]$, see \cite{PT84}.

In this paper we are going to investigate some properties of (universal) Banach spaces of real functions of a real variable in the context of topological entropy. We show how to construct a universal Banach space using the zero function and functions with infinite topological entropy - Theorem~\ref{thm:A}, and as a supplement of the result from \cite{PT84} we show that any subspace of $C([0,1])$ which is isometrically isomorphic to $\ell_1$ contains a functions with infinite topological entropy - Theorem~\ref{thm:E}. Finally, for any $t \in [0, \infty]$ we construct a (one-dimensional) Banach space in which any nonzero function has its topological entropy equal to $t$  - Theorem~\ref{thm:D}.

\section{Preliminaries and auxiliary results}

Let $C_b(X)$ denote the set of all {\it bounded} continuous functions
$f: X \to \R$ equipped with the supremum norm. Clearly, $C_b(\R)$ is a non-separable Banach space. Let $[a,b]$ be a closed finite subinterval of $\R$. We identify $f\colon~[a,b]\to\R$ with its extension
\begin{equation}\label{e:1}
(\Ex f)(x) = \left\{ \begin{array}{ll}
f(x) & \text{ if } x \in [a,b]; \\[1mm]
f(b) & \text{ if } x \geq b; \\[1mm]
f(a)  & \text{ if } x \leq a.
\end{array} \right.
\end{equation}
Under this identification, $C([a,b])\subset C_b(\R)$. We will deal with the topological entropy  of maps from $C_b(\R)$ defined as $\htop(f):=\htop(f\vert_{ \overline{f(\R)}})$ - see \cite[Chapter 4]{alm00}.

The well known Banach - Mazur Theorem states that the Banach space $C([0,1])$ is universal, \ie every real, separable Banach space $\D$ is isometrically
isomorphic to a closed subspace of $C([0, 1])$. Since by our convention, $C([0,1])$ is a closed subspace of $C_b(\R)$, the non-separable space $C_b(\R)$ is also universal. In our paper we will restrict ourselves to separable universal Banach spaces only.

Following \cite{alm00} we recall the notion of horseshoe.

\begin{definition}\label{def:A}
A function $f\in C_b(\R)$ is said to have a {\em $d$-horseshoe}
if there exist $d$ subintervals $I_1, I_2, \dots, I_d$ of $\R$ with disjoint interiors such that $f(I_i) \supset I_j$ for all $1 \leq i,j \leq d$.
\end{definition}

\begin{proposition}\label{prop:A}\cite{MS}~If $f\in C_b(\R)$ has a {\em $d$-horseshoe} then $\htop(f) \geq \log d$.\end{proposition}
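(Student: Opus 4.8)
The plan is to bound $\htop(f)$ from below through the definition of entropy via $(n,\eps)$-separated sets on the compact space $\overline{f(\R)}$ on which $\htop(f)$ is evaluated, using the horseshoe to manufacture $d^n$ orbit segments of length $n$ that are pairwise $\eps$-distinguishable. The whole horseshoe lies in the compact interval $K:=\conv(I_1\cup\dots\cup I_d)$: since $f(I_i)$ is an interval containing every $I_j$, it contains their convex hull, so $f(I_i)\supseteq K$ for each $i$, and in particular $K\subseteq f(\R)\subseteq\overline{f(\R)}$. It therefore suffices to exhibit, for each $n$ and one fixed $\eps>0$, a subset of $K$ of cardinality at least $d^n$ that is $(n,\eps)$-separated; then $\htop(f)\ge\lim_n\frac1n\log d^n=\log d$.

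The combinatorial engine is the elementary covering lemma: if $J,L$ are compact intervals with $f(J)\supseteq L$, then some compact subinterval $J^*\subseteq J$ satisfies $f(J^*)=L$ (extract a minimal subinterval whose image is $L$, using the intermediate value theorem so that $\partial J^*$ maps into $\partial L$). Iterating this backwards along a finite word $w=(w_0,\dots,w_{n-1})\in\{1,\dots,d\}^n$ produces a nested chain of compact intervals and hence a point $x_w$ with $f^k(x_w)\in I_{w_k}$ for $0\le k<n$; the horseshoe hypothesis $f(I_i)\supseteq I_j$ guarantees that every one of the $d^n$ words is realized. This is the step that injects the exponential growth. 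For the separation, observe that if two words $w\ne w'$ first differ at coordinate $k$, then $f^k(x_w)$ and $f^k(x_{w'})$ lie in $I_{w_k}$ and $I_{w'_k}$, two intervals with disjoint interiors; if these were at mutual distance $\ge\eps$ the two orbits would be $(n,\eps)$-separated and we would be done with $\eps=\min_{i\ne j}\dist(I_i,I_j)$.

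The main obstacle is exactly that the $I_i$ are only assumed to have \emph{disjoint interiors}, so two of them may share an endpoint and $\dist(I_i,I_j)$ can be $0$. The tent map, whose two branches meet at the peak that realizes the extreme value needed for the covering, shows that one cannot simply retract the intervals to strictly positive distance without destroying the covering property. I would handle this by choosing the realizing points $x_w$ so that their first $n$ iterates all land in the interiors $\mathrm{int}(I_{w_k})$, in fact in fixed compact subintervals $J_i\subseteq\mathrm{int}(I_i)$ of mutual distance $\eps>0$: at each backward step of the covering lemma one has the freedom to select a preimage, and by first fixing interior target points and avoiding the finitely many shared endpoints (whose preimages under the relevant iterate form a nowhere dense set) one can keep the whole orbit segment off the shared boundaries.

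The delicate point is precisely to verify that this selection is always possible, i.e.\ that the covering can be carried out through interior points even when some extreme values of $f$ on an $I_i$ are attained only at a shared endpoint. Once that is settled, the $d^n$ points are genuinely $(n,\eps)$-separated, so the maximal cardinality of an $(n,\eps)$-separated set is at least $d^n$, and the bound $\htop(f)\ge\log d$ follows. I expect the covering/realization argument to be routine and the interior-orbit selection to be the only step requiring care.
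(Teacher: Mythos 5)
Your first two steps (locating the horseshoe inside $\overline{f(\R)}$ via $K=\conv(I_1\cup\dots\cup I_d)$, and realizing every word by the pullback/covering lemma) are fine, and your diagnosis of the difficulty is exactly right: everything hinges on the intervals only having disjoint \emph{interiors}. But the repair you propose is not merely ``delicate'' --- it is impossible, so the proof has a genuine gap at precisely the point you flag. (The paper itself gives no proof of Proposition~\ref{prop:A}; it quotes it from \cite{MS}, and the gap you leave open is exactly the nontrivial content of that result.) Concretely: you ask for fixed compact intervals $J_i\subseteq\mathrm{int}(I_i)$, at mutual distance $\eps>0$, through which \emph{every} word of \emph{every} length is realized. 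Take the tent map $T(x)=1-|2x-1|$ with $I_1=[0,\tfrac12]$, $I_2=[\tfrac12,1]$. If $x$ realizes the constant word $11\cdots1$ of length $n$, then $T^k(x)=2^kx$ for $k<n$, which forces $x\le 2^{-n}$; so the orbit segment contains a point of $[0,2^{-n}]$ and hence leaves any fixed compact $J_1\subset(0,\tfrac12)$ once $n$ is large. No choice of $J_1$ works for all $n$. Nor can you retreat to the weaker requirement that orbits merely stay a fixed distance from the \emph{shared} endpoints: one can build a $2$-horseshoe on $I_1=[0,1]$, $I_2=[1,2]$ with $f^{-1}(I_1)\cap(I_1\cup I_2)\subset[1-\eta,1+\eta]$, so that every realization of every word having symbol $1$ at some coordinate $k+1$ is forced $\eta$-close to the shared endpoint at coordinate $k$; then the two words $1u$ and $2u$ (with $u$ beginning with $1$), which differ \emph{only} at coordinate $0$, can never be separated there by more than $2\eta$, however the realizations are chosen. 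Thus the separation constant cannot be read off from the horseshoe data, and the selection problem is global, not routine.

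A correct repair needs a different idea; here is a standard one that grafts onto your argument. In the pullback construction take each interval \emph{minimal}: set $D_{w_{n-1}}=I_{w_{n-1}}$ and, inductively, let $D_{w_k\cdots w_{n-1}}\subseteq I_{w_k}$ be a minimal compact interval with $f(D_{w_k\cdots w_{n-1}})=D_{w_{k+1}\cdots w_{n-1}}$. Minimality forces the two endpoint values of the target to be attained only at the two endpoints of $D$ (otherwise a proper subinterval already covers the target). From this one proves, by induction on word length, that distinct words of length $n$ give intervals with pairwise disjoint interiors: an interior overlap $U$ would satisfy $f(U)\subseteq D_{w_{k+1}\cdots}\cap D_{w'_{k+1}\cdots}$, a single point $p$ lying on the boundary of the targets, so $f\equiv p$ on $U$, contradicting minimality. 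Since $f^{n-1}(D_w)=I_{w_{n-1}}$, we get $f^n(D_w)=f(I_{w_{n-1}})\supseteq\bigcup_j I_j\supseteq\bigcup_{w'}D_{w'}$, i.e.\ $\{D_w\}_{w\in\{1,\dots,d\}^n}$ is a $d^n$-horseshoe for $f^n$. These $d^n$ nondegenerate intervals are linearly ordered along the line, so taking every other one yields at least $d^n/2$ \emph{pairwise disjoint} compact intervals, each of which $f^n$-covers all the others. To this strict horseshoe your separated-set argument applies verbatim, with a genuine $\eps>0$, and yields $\htop(f)=\tfrac1n\htop(f^n)\ge\tfrac1n\log(d^n/2)=\log d-\tfrac{\log 2}{n}$; letting $n\to\infty$ gives $\htop(f)\ge\log d$.
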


In the next lemma we denote by $F(X)$ a linear space of functions $f\colon~X\to\R$.

\begin{lemma}\label{lem:indep}
Given $n$ linearly independent functions in $F(X)$,
there exist $n$ points $x_1, \dots, x_n \in X$ such that the vectors
$$
\left( \begin{array}{c}
f_1(x_1) \\ f_1(x_2) \\ \vdots \\ f_1(x_n)
\end{array} \right),
\left( \begin{array}{c}
f_2(x_1) \\ f_2(x_2) \\ \vdots \\ f_2(x_n)
\end{array} \right),  \dots ,
\left( \begin{array}{c}
f_n(x_1) \\ f_n(x_2) \\ \vdots \\ f_n(x_n)
\end{array} \right)
$$
are linearly independent in $\R^n$.
\end{lemma}

\begin{proof}
This is clear if $n = 1$.
Assume now by induction that the lemma holds for $k < n$ and points $x_1, \dots, x_k$.
For the unique linear combination such that $f_{k+1}(x_i) = a_1f_1(x_i) + \dots + a_k f_k(x_i)$
for all $1 \leq i \leq k$.
Now if $f_{k+1}(x) = a_1f_1(x) + \dots + a_k f_k(x)$ for all
$x \in X$, then $f_1, \dots, f_{k+1}$ are linearly dependent, contrary to our assumption.
So there must be some other point $x_{k+1}$ for which
$f_{k+1}(x_{k+1}) \neq a_1f_1(x_{k+1}) + \dots + a_k f_k(x_{k+1})$, which concludes the
induction step.
\end{proof}

\section{The Main Theorems}\label{sec:theorems}

\begin{definition}\label{def:B}
For a given set $\B \subset C_b(\R)$,
let
\begin{eqnarray*}
\htop^+(\B) &=& \sup\{ \htop(f) : f \in \B\}, \\[2mm]
\htop^-(\B) &=& \inf\{ \htop(f) : f \in \B,~f\text{ is non-zero}\}.
\end{eqnarray*}
\end{definition}

\begin{theorem}\label{thm:B}
If a linear space $\B \subset C_b(\R)$ has dimension $n$, then $$\htop^+(\B) \geq \log(n-1).$$
 In particular, $\htop^+(\B) = \infty$ if $\dim(\B) = \infty$.
\end{theorem}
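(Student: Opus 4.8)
The plan is to produce a single function in $\B$ carrying an $(n-1)$-horseshoe and then quote Proposition~\ref{prop:A}. Write $\B = \mathrm{span}\{f_1,\dots,f_n\}$ with $f_1,\dots,f_n$ linearly independent. By Lemma~\ref{lem:indep} there are points $x_1,\dots,x_n$ at which the column vectors $(f_j(x_i))_{i=1}^n$ are linearly independent in $\R^n$; equivalently the evaluation matrix $M=(f_j(x_i))_{i,j}$ is invertible. First I would observe that these points must be pairwise distinct: if $x_i=x_{i'}$ for $i\neq i'$ then rows $i$ and $i'$ of $M$ coincide and $M$ is singular, contradicting invertibility. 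After relabelling I may therefore assume $x_1<x_2<\dots<x_n$.

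Since $M$ is invertible, for any target vector $(y_1,\dots,y_n)\in\R^n$ the linear system $Ma=y$ has a solution $a=(a_1,\dots,a_n)$, so the function $f:=\sum_j a_j f_j\in\B$ realizes any prescribed nodal values $f(x_i)=y_i$. I would use this freedom to force oscillation by making the values alternate far below and far above the whole node interval: take $y_i=x_1-1$ for odd $i$ and $y_i=x_n+1$ for even $i$. Note $f$ is a finite linear combination of elements of $C_b(\R)$, hence itself lies in $C_b(\R)\cap\B$.

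Next, set $I_k:=[x_k,x_{k+1}]$ for $k=1,\dots,n-1$; these are $n-1$ intervals with pairwise disjoint interiors. For each $k$ the endpoint values $f(x_k),f(x_{k+1})$ are one $<x_1$ and the other $>x_n$, so by the intermediate value theorem the image $f(I_k)$ is an interval containing $[x_1,x_n]$, and hence containing every $I_j\subset[x_1,x_n]$. Thus $f(I_k)\supset I_j$ for all $1\le k,j\le n-1$, i.e. $f$ has an $(n-1)$-horseshoe, and Proposition~\ref{prop:A} yields $\htop(f)\ge\log(n-1)$, whence $\htop^+(\B)\ge\log(n-1)$. For the final assertion, if $\dim(\B)=\infty$ then $\B$ contains an $n$-dimensional subspace for every $n$, so the displayed bound forces $\htop^+(\B)=\infty$.

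\textbf{Main obstacle.} There is no deep analytic difficulty here; everything reduces to the two auxiliary results. The one point that wants care is the passage from prescribed nodal values to the horseshoe, namely choosing a value pattern that makes \emph{every} consecutive pair $(x_k,x_{k+1})$ straddle the full interval $[x_1,x_n]$ at once. I expect verifying that the single alternating assignment accomplishes this simultaneously for all $k$ to be the crux; the distinctness of the points and the solvability of $Ma=y$ are immediate consequences of Lemma~\ref{lem:indep}.
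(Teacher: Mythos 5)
Your proposal is correct and follows essentially the same route as the paper: apply Lemma~\ref{lem:indep} to get evaluation points, solve the (invertible) linear system to prescribe alternating values at those points, observe that the consecutive intervals $[x_k,x_{k+1}]$ then form an $(n-1)$-horseshoe, and conclude via Proposition~\ref{prop:A}. The only cosmetic difference is that you prescribe values $x_1-1$ and $x_n+1$ where the paper uses $x_1$ and $x_n$ exactly, and you spell out the distinctness of the points and the invertibility argument that the paper leaves implicit.
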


\begin{proof}
Take $f_1, \dots, f_n \in \B$ linearly independent and find points $x_1, \dots, x_n$
as in Lemma~\ref{lem:indep}. We can assume that $x_1 < x_2 < \dots < x_n$.
Form a linear combination
$f = a_1f_1 + \dots + a_n f_n$ such that $f(x_i) = x_1$ if $i$ is odd, and
 $f(x_i) = x_n$ if $i$ is even.
 Then $f$ has an $(n-1)$-horseshoe, so from Proposition~\ref{prop:A} we get $\htop(f) \geq \log(n-1)$ as required.
\end{proof}

\begin{example}
(i)~Let $[a,b]$ be a closed subinterval of $\R$. Given a continuous function $f:\R \to \R$, let
$$
(\Cr_{[a,b]} f)(x) = \left\{ \begin{array}{ll}
f(x) & \text{ if } x \in [a,b]; \\[1mm]
f(b) & \text{ if } x \geq b; \\[1mm]
f(a)  & \text{ if } x \leq a,
\end{array} \right.
$$
be the {\em cropped} version of $f$. Clearly $\Cr_{[a,b]} f \in C([a,b])\subset C_b(\R)$.
Let
$$
P^{n-1} = \{ \Cr_{[a,b]} p : p\in C(\R)\text{ is a polynomial of degree } \leq n-1\}.
$$
Then $P^{n-1}$ has dimension $n$, each $f \in P^{n-1}$ is at most $n-2$-modal, so our definition of the entropy $\htop (f)$ and \cite[Theorem 4.2.4]{alm00} imply that $\htop (f)\leq \log (n-1)$.
This shows that the bound in Theorem~\ref{thm:B} is sharp.

(ii) Let
$$
P = \{ \Cr_{[a,b]} p : p\in C(\R)\text{ is a polynomial }\}.
$$
Then $P$ is a normed linear subspace of $C([a,b])$ and $\dim(P)=\infty$, hence by Theorem~\ref{thm:B}, $\htop^+(P) = \infty$. By the same argument as above, the entropy  of any $p\in P$ satisfies $\htop(p)\le\log\deg(p)$, so it is finite.
\end{example}

Theorem~\ref{thm:B} does not answer the question whether every infinite dimensional Banach space $\A\subset C_b(\R)$ contains a function with infinite entropy. Our next example shows that in general it is not the case.

\begin{example}
For $n \ge 1$ and $a \in \R$, let $f_{n, a}:\R \to \R$ be given by
$$
f_{n, a}(x) = \left\{ \begin{array}{ll}
a \cdot (x-2+\frac1n) \cdot (2-\frac{1}{n+1}-x) & \text{ if } x \in J_n :=  [2-\frac{1}{n}, 2-\frac{1}{n+1}]; \\[1mm]
0  & \text{ otherwise.}
\end{array} \right.
$$
Clearly $f_{n,a}$ is unimodal, so its entropy $\htop(f_{n,a}) \leq \log 2$.
Consider the smallest Banach space $Q$ 
(subspace of $C_b(\R)$ with supremum norm) 
containing all finite sums $f_{1,a_1}+f_{2,a_2}+\cdots+f_{n, a_n}$.
Then $dim(Q)=\infty$, $\lim_{x\to 2_{-}}f(x)=0$ for each $f\in Q$ and if 
$$
\max\{x\in\R\colon~f(x)=1\}\in J_n, 
$$
then since the modality of $f|_{[1,1-\frac{1}{n+1}]}$ is at most $2n$
and $f^2(x) = 0$ for $x \notin [1,1-\frac{1}{n+1}]$, we conclude that 
$\htop(f)\le\log(2n+1)$. 
\iffalse
Let $[a,b]$ be a closed subinterval of $\R$. 
Given a continuous function $g:\R \to \R$, let
$$
(\M_{[a,b]} g)(x) = \left\{ \begin{array}{ll}
(x-a) \cdot (b-x)\cdot g(x) & \text{ if } x \in [a,b]; \\[1mm]
0 & \text{ if } x \geq b; \\[1mm]
0  & \text{ if } x \leq a,
\end{array} \right.
$$
be the {\em multiplied} version of $g$. Clearly $\M_{[a,b]} g \in C([a,b])\subset C_b(\R)$.
Let for $n\ge 1$
$$
Q^{n-1} = \{ \M_{[1-\frac{1}{n},1-\frac{1}{n+1}]} p : p\in C(\R)\text{ is a polynomial of degree } \leq n-1\}.
$$
Then $Q^{n-1}$ has dimension $n$, each $f \in Q^{n-1}$ is a polynomial of degree at most $n+1$ on  $[1-\frac{1}{n},1-\frac{1}{n+1}]$, hence $f$ is at most $n$-modal.
Similarly as above one gets that $\htop(f)\leq \log (n+1)$. 
Consider the smallest  Banach space $Q$ (subspace of $C_b(\R))$) 
containing all finite sums $f_1+f_2+\cdots+f_n$, where $f_i\in Q^{i-1}$. Then $dim(Q)=\infty$, $\lim_{x\to 1_{-}}f(x)=0$ for each $f\in Q$  and if 
$$
max\{x\in\R\colon~f(x)=x\}\in [1-\frac{1}{n},1-\frac{1}{n+1}], 
$$
then since the modality of $f$ is at most $\sum_{j=1}^n j = n(n+1)/2$,
% by the previous estimate and the definition of entropy, 
$\htop(f)\le \log (n+1)(n+2)/2$. 
\fi
\end{example}

As a counterpart of the previous example we will prove the following theorem.

\begin{theorem}\label{thm:E}
Let $\A\subset C([0,1])$ be isometrically isomorphic to $\ell_1$. Then $\A$ contains a function with infinite topological entropy.\end{theorem}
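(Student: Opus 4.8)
The plan is to realise infinite entropy through horseshoes of unbounded order inside a single, carefully assembled element of $\A$, rather than through a sequence of functions of growing finite entropy as in Theorem~\ref{thm:B}. Writing $T\colon \ell_1\to\A$ for the given isometric isomorphism and $g_n:=T e_n$, the first step is the following \emph{sign-pattern} property, which here plays the role of Lemma~\ref{lem:indep}: for every $N$ and every $\eps\in\{-1,+1\}^N$ there is a point $t_\eps\in[0,1]$ with $g_n(t_\eps)=\eps_n$ for all $n\le N$. Indeed $\|\sum_{n\le N}\eps_n g_n\|_\infty=\sum_{n\le N}|\eps_n|=N$, and since the supremum is attained on the compact interval $[0,1]$ while $|g_n|\le\|g_n\|=1$, equality forces $\eps_n g_n(t_\eps)=1$, hence $g_n(t_\eps)=\eps_n$, for each $n\le N$. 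In particular every $g_n$ attains both values $+1$ and $-1$.

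Next I would look for coefficients $(a_n)\in\ell_1$ so that $f=\sum_n a_n g_n\in\A$ admits, for every $d$, a $d$-horseshoe; Proposition~\ref{prop:A} then yields $\htop(f)\ge\log d$ for all $d$, hence $\htop(f)=\infty$. It is important that entropy is insensitive to scale, so the horseshoe intervals and their common image-band may be taken arbitrarily small: it suffices to produce, for each $d$, ordered points $u_0<u_1<\dots<u_d$ lying in a short window $[\alpha,\beta]$ on which $f$ alternates, $f(u_i)\le\alpha$ for even $i$ and $f(u_i)\ge\beta$ for odd $i$. Then $f([u_{i-1},u_i])\supseteq[\alpha,\beta]\supseteq\{u_0,\dots,u_d\}$, so thin neighbourhoods $I_1,\dots,I_d$ of the $u_i$ form a $d$-horseshoe.

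To manufacture such an alternation I would split the indices into consecutive blocks $B_1,B_2,\dots$, assign to $B_k$ a total weight $w_k$ with $\sum_k w_k<\infty$ and $w_{k+1}\ll w_k$, and use the sign-pattern points of block $B_k$ as the source of the oscillation responsible for the $k$-th horseshoe. On a sufficiently short window the contribution of the earlier blocks $B_1,\dots,B_{k-1}$ is nearly constant by uniform continuity, while the contribution of the later blocks is uniformly smaller than $w_k$; thus on that window $f$ behaves, up to an additive near-constant and an error smaller than the oscillation amplitude, like the single block $\sum_{n\in B_k}a_n g_n$. Letting the block weights decay fast keeps $(a_n)\in\ell_1$ and guarantees that each horseshoe, once created, survives all later modifications, so a compactness argument produces the limit $f\in\A$ carrying horseshoes of every order.

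The main obstacle is positional: the sign-pattern property controls the \emph{values} of $f$ at points that lie \emph{somewhere} in $[0,1]$, whereas a horseshoe requires an \emph{ordered} sequence of points with \emph{alternating} values inside one short window. Converting value information into genuine positional oscillation — ensuring that, after sorting the relevant points by position, a long alternating pattern remains, and that the block assigned to level $k$ oscillates within a prescribed window rather than merely somewhere — is the delicate point. I expect to handle it by an inductive construction interleaving the choice of blocks with a shrinking nest of windows $W_1\supseteq W_2\supseteq\cdots$: at stage $k$ one selects, by a pigeonhole argument among the $2^{|B_k|}$ sign-pattern points of the enlarged index set, enough points falling in $W_{k-1}$ to force one further alternation there, fixes the signs of the already-used coordinates accordingly, and then passes to a subwindow $W_k$ on which the part of $f$ built so far is essentially constant. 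The quantitative bookkeeping — how small $W_k$ and how large $|B_k|$ must be so that the alternations accumulate to a $d$-horseshoe for every $d$ — is where the real work lies.
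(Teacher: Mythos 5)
Your overall architecture (blocks of coordinates with rapidly decaying weights, shrinking windows around an accumulation point, ``old blocks nearly constant, new block oscillates, future blocks uniformly negligible'') is exactly the skeleton of the paper's proof, but the proposal has two genuine gaps. First, your sign-pattern lemma is stated too strongly and, as stated, is false: from $\|\sum_{n\le N}\eps_n g_n\|_\infty=N$ you may only conclude that at a maximizing point $t$ either $g_n(t)=\eps_n$ for \emph{all} $n\le N$ or $g_n(t)=-\eps_n$ for \emph{all} $n\le N$, because the supremum norm is the supremum of the absolute value and the sum may equal $-N$ there; a single $g_n$ need never attain the value $+1$ at all. The paper's Claim~\ref{cl:2} asserts precisely this weaker ``either $s$ or $-s$'' version, and curing the ambiguity is not cosmetic: the paper pads every pattern with a long prefix of $+1$'s (coordinates $1,\dots,2^n$), which both forces the realizing points of different blocks to carry a coherent global sign in the limit (so one may assume $f_i(x_0)=+1$ for all $i$) and guarantees the realizing points are pairwise distinct.

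Second, and more seriously, the step you explicitly defer (``where the real work lies'') is the actual content of the theorem, and the mechanism you sketch would not supply it. Pigeonholing among all $2^{|B_k|}$ sign-pattern points does place many points in a short window, but you cannot control \emph{which} patterns those points realize; to force values alternating with \emph{position} at those points you must solve a linear system whose rows are the realized patterns, and nothing guarantees that system is solvable, let alone with coefficients small enough to keep $(a_n)\in\ell_1$. In particular ``fixing the signs of the already-used coordinates'' (choosing only signs of coefficients) cannot prescribe alternating values at arbitrarily positioned points. The paper's key device is to realize, for each $n$, the $n$ rows of the \emph{explicit invertible} $\pm1$ matrix $A_n$ at $n$ distinct points $x^n_1,\dots,x^n_n$, extract subsets of these points clustering in windows shrinking to $x_0$, and then prescribe the block's values by solving $A_n\alpha^n=\beta^n$, where $\beta^n$ assigns alternating values $\pm\gamma_m$ to the points falling in the window, taken in their positional order, and $0$ to all the others. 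The explicit inverse formula of Claim~\ref{cl:3} gives $\max_i|\alpha^n_i|\le\max_i|\beta^n_i|$ together with at most $2(m+3)$ nonzero entries, which is what makes $\sum_{n,k}|\alpha^n_k|<\infty$ and hence $f\in\A$. Without an a priori structured, well-conditioned family of patterns of this kind, your induction cannot start. One further caution: each stage must produce all $m+3$ alternations of the \emph{same} amplitude inside one window, as the paper does; accumulating ``one further alternation'' per stage at ever smaller amplitudes in nested windows yields only a triangular covering structure among the resulting intervals, which carries zero entropy.
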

\begin{proof}Let $\Phi$ be an isometrical isomorphism ensured by the statement, so $\Phi(\ell_1)=\A$.
For $i\in\N$ let $e_i=(e_{ij})_{j=1}^{\infty}\in\ell_1$ be defined by
\begin{equation*}
e_{ij}=\delta_{ij},
\end{equation*}
where $\delta_{ij}$ is the Kronecker delta. Then for every $n\in\N$ and every choice of distinct positive integers $i(1), \dots,i(n)$
\begin{equation*}\| \pm e_{i(1)}\pm \cdots\pm e_{i(n)}\|_{\ell_1}=n.\end{equation*} Denote $f_i=\Phi(e_i)\in\A\subset C([0,1])$, $i\in\N$. Clearly $\|f_i\|=1$; in particular for every $x\in [0,1]$,\begin{equation}\label{e:7}\vert f_i(x)\vert\le 1.\end{equation}
\begin{claim}\label{cl:2}For every $s=(s_i)_i\in\{1,-1\}^{\N}$ there exists a point $x\in [0,1]$ such that the sequence $(f_i(x))_{i\in\N}$ is equal to either
$s$ or $-s$.
\end{claim}
\begin{proof}
Assume that for some $n$,
\begin{equation*}
\forall~x\in [0,1]\colon~(f_i(x))_{i=1}^n\neq (s_i)_{i=1}^n\text{ and }(f_i(x))_{i=1}^n\neq (-s_i)_{i=1}^n;
\end{equation*}
then \eqref{e:7} implies $\vert \sum_{i=1}^ns_if_i(x)\vert<n$ for every $x\in [0,1]$. This contradicts the equalities
\begin{equation}
\|\sum_{i=1}^ns_ie_i\|_{\ell_1}=\|\sum_{i=1}^ns_if_i\|=n.
\end{equation}
Thus, for each $n\in\N$ one can find a point $x_n\in [0,1]$ for which either $(f_i(x_n))_{i=1}^n=(s_i)_{i=1}^n$ or $(f_i(x_n))_{i=1}^n=(-s_i)_{i=1}^n$. Taking a limit point $x$ of the sequence $(x_n)_n$, from the continuity of the functions $f_i$ we get either $(f_i(x))=s$ or $(f_i(x))=-s$.
\end{proof}

For $n>1$ define the matrix $A_n=(a^n_{ij})_{i,j=1}^n$ by $a_{ij}=(-1)^i$ for $1\le j<i$ and $a_{ij}=(-1)^{i+1}$ when $i\le j\le n$.
For instance, the particular matrix $A_8$ is
$$
%\spreadmatrixlines{3mm}
\begin{pmatrix}
+1 &\  +1 &\  +1 &\  +1 &\  +1 &\  +1 &\  +1 &\  +1 \\
+1 &\  -1 &\  -1 &\  -1 &\  -1 &\  -1 &\  -1 &\  -1 \\
-1 &\  -1 &\  +1 &\  +1 &\  +1 &\  +1 &\  +1 &\  +1 \\
+1 &\  +1 &\  +1 &\  -1 &\  -1 &\  -1 &\  -1 &\  -1 \\
-1 &\  -1 &\  -1 &\  -1 &\  +1 &\  +1 &\  +1 &\  +1 \\
+1 &\  +1 &\  +1 &\  +1 &\  +1 &\  -1 &\  -1 &\  -1 \\
-1 &\  -1 &\  -1 &\  -1 &\  -1 &\  -1 &\  +1 &\  +1 \\
+1 &\  +1 &\  +1 &\  +1 &\  +1 &\  +1 &\  +1 &\  -1 \\
\end{pmatrix}
$$

One can easily verify the following fact.

\begin{claim}\label{cl:3}For any ${\bf \beta}\in\R^n$, the linear equation $A_n{\bf \alpha}={\bf \beta}$ has a unique solution ${\bf \alpha}$ given by the formulas
\begin{equation}\label{e:10}
\alpha_i=\frac{\beta_i+\beta_{i+1}}{(-1)^{i+1}2},~i=1,\dots,n-1,
\quad \alpha_n=\frac{\beta_1+(-1)^{n+1}\beta_n}{2}.
\end{equation} In particular, $\max\vert \alpha_i\vert\le \max\vert \beta_i\vert$.\end{claim}

Let us denote the $i$-th row of the matrix $A_n$ by $a^n_i=(a^n_{i1},a^n_{i2},\cdots,a^n_{in})$. By Claim~\ref{cl:2}, for each $n>1$ there are distinct
points $x^n_1,\dots,x^n_n\in [0,1]$ such that either
\begin{equation}\label{e:8}
f_1(x^n_i)=\cdots =f_{2^n}(x^n_i)=1,~(f_{2^n+1}(x^n_i),f_{2^n+2}(x^n_i),\cdots,f_{2^n+n}(x^n_i))=a^n_i,
\end{equation}
or
\begin{equation}\label{e:9}
f_1(x^n_i)=\cdots =f_{2^n}(x^n_i)=-1,~(f_{2^n+1}(x^n_i),f_{2^n+2}(x^n_i),\cdots,f_{2^n+n}(x^n_i))=-a^n_i.
\end{equation}

Put $X_n=\{x^n_1,\dots,x^n_n\}$. Since $n=\card(X_n)$ is growing to infinity,
one can consider subsets $X'_n\subset X_n$ satisfying
\begin{equation}\label{e:11}
\lim_{n\to\infty}\card(X'_n)=\infty,~\lim_{n\to\infty}\diam(X'_n)=0.
\end{equation}
Passing to a subsequence if necessary, we can assume that $X'_n\rightarrow x_0\in [0,1]$, \ie
\begin{equation}\label{e:12}\forall~\varepsilon>0~\exists~n_0~\forall~n>n_0\colon~X'_n\subset (x_0-\varepsilon,x_0+\varepsilon).\end{equation}

Now, using \eqref{e:8} and \eqref{e:9}, we obtain that either $(f_i(x_0))_i=(1)_i$ or $(f_i(x_0))_i=(-1)_i$. Without loss of generality assume the first possibility. Notice that then
\begin{equation}\label{e:13}
\forall~n>1\colon~(f_{2^n+1}(x_0),f_{2^n+2}(x_0),\cdots,f_{2^n+n}(x_0))=a^n_1.
\end{equation}

We can formally put

\begin{equation}\label{e:14}
e=x_0e_1+\sum_{n=2}^{\infty}\sum_{k=1}^n\alpha^n_ke_{2^n+k},~\Phi(e)=f(x)=x_0f_1(x)+\sum_{n=2}^{\infty}\sum_{k=1}^n\alpha^n_kf_{2^n+k}(x),
\end{equation}

where coefficients $\alpha^n=(\alpha^n_1,\alpha^n_2,\cdots,\alpha^n_n)$ satisfy a linear equation $A_n\alpha^n=\beta^n$, $\beta^n=(\beta^n_1,\beta^n_2,\cdots,\beta^n_n)\in\R^n$. It can be easily seen that $f\in \A$
if and only if
\begin{equation*}
\sum_{n=2}^{\infty}\sum_{k=1}^n\vert\alpha^n_k\vert<\infty.
\end{equation*}

Moreover, if $\beta^n_1=0$ for each $n$ and $f\in C([0,1])$ then $f(x_0)=x_0$
by the equation $x_0f_1(x_0)=x_0$ and the property \eqref{e:13} implying
\begin{equation*}
\sum_{k=1}^n\alpha^n_kf_{2^n+k}(x_0)=0\text{ for each }n.
\end{equation*}

Using Claim~\ref{cl:3} we will show in the sequel that there exists a sequence $(\beta^n=(0,\beta^n_2,\cdots,\beta^n_n))_n$ such that the corresponding function $f$ given by \eqref{e:14} satisfies $f\in \A$ and $\htop(f)=\infty$. In what follows we denote
\begin{equation*}
g_1(x)=x_0f_1(x),~g_m(x)=g_1(x)+\sum_{n=2}^m\sum_{k=1}^n\alpha^n_kf_{2^n+k}(x),~m\ge 2.
\end{equation*}
Let $\omega(f,X) = \sup_{x,y\in X}\vert f(x)-f(y)\vert$
denote the oscillation of a function $f$ on a set $X$.
For a positive $\varepsilon(i)$ we use the notation $J(i)=[x_0-\varepsilon(i),x_0+\varepsilon(i)]$. The zero element in $\R^n$ is denoted by $0_n$. Let $(\gamma_m)_{m\in\N}$ be a sequence of positive numbers satisfying for each $m$
\begin{equation}\label{e:17}
\gamma_m>\sum_{i=m+1}^{\infty}2(i+3)\gamma_i.
\end{equation}
\vskip1mm
{\bf Step 0.} $n(0)=1$.
\vskip1mm
{\bf Step 1.} We can find values $\varepsilon(1)>0$ and $n(1)>n(0)+1$ such that
\begin{equation}\label{e:16}
\varepsilon(1)+\omega(g_{n(0)},J(1))<\gamma_1-\sum_{i=2}^{\infty}2(i+3)\gamma_i,
\end{equation}
\begin{equation*}
J(1)\cap X'_{n(1)}\supset \{x^{n(1)}_{i(1)}<x^{n(1)}_{i(2)}<x^{n(1)}_{i(3)}<x^{n(1)}_{i(4)}\}.
\end{equation*}
We put $\beta^n=0_n$ for each $n(0)<n<n(1)$; the coefficients $\alpha^{n(1)}_k$, $k=1,\dots,n(1)$ are gained as the unique solution of the linear equation $A_{n(1)}\alpha^{n(1)}=\beta^{n(1)}$, where (as we already know) $\beta^{n(1)}_1=0$, $\beta^{n(1)}_{i(j)}=(-1)^j\gamma_1$, $j=1,2,3,4$ and $\beta^{n(1)}_i=0$ otherwise.

\vskip1mm
{\bf Step m.} We can find values $\varepsilon(m)>0$ and $n(m)>n(m-1)+1$ such that
\begin{equation}\label{e:15}
\varepsilon(m)+\omega(g_{n(m-1)},J(m))<
\gamma_m-\sum_{i=m+1}^{\infty}2(i+3)\gamma_i,
\end{equation}
\begin{equation*}
J(m)\cap X'_{n(m)}\supset \{x^{n(m)}_{i(1)}<x^{n(m)}_{i(2)}<\cdots<x^{n(m)}_{i(m+2)}<x^{n(m)}_{i(m+3)}\}.
\end{equation*}
We put $\beta^n=0_n$ for each $n(m-1)<n<n(m+1)$; the coefficients $\alpha^{n(m)}_k$, $k=1,\dots,n(m)$ are gained as the unique solution of the linear equation $A_{n(m)}\alpha^{n(m)}=\beta^{n(m)}$, where $\beta^{n(m)}_1=0$, $\beta^{n(m)}_{i(j)}=(-1)^j\gamma_m$, $j=1,\dots,m+3$ and $\beta^{n(m)}_i=0$ otherwise.

Since by Claim~\ref{cl:3}, $\alpha^n_k=0$ for $n\neq n(m)$, $\vert\alpha^{n(m)}_k\vert\le \gamma_m$ and by \eqref{e:10} there are at most $2(m+3)$ nonzero coefficients $\alpha^{n(m)}_k$,
one can see that by our choice of the $\beta$'s
\begin{equation*}
\sum_{n=2}^{\infty}\sum_{k=1}^n\vert\alpha^n_k\vert\le\sum_{m=1}^{\infty}\sum_{k=1}^{n(m)}\vert\alpha^{n(m)}_k\vert\le
\sum_{m=1}^{\infty}2(m+3)\gamma_m<\infty.
\end{equation*}
Thus, the function $f$ given by the above coefficients $\alpha^n_k$ and the formula \eqref{e:14} belongs to the space $\A$.
Using the equality $g_{n(m-1)}(x_0)=x_0$ and \eqref{e:17}, \eqref{e:15} we get for $j\le m+3$ odd
\begin{equation*}\label{e:18}
f(x^{n(m)}_{i(j)})\le x_0+\omega(g_{n(m-1)},J(m))-\gamma_m+\sum_{i=m+1}^{\infty}2(i+3)\gamma_i\le x_0-\varepsilon(m)
\end{equation*}
and analogously for $j\le m+3$ even
\begin{equation*}\label{e:19}
f(x^{n(m)}_{i(j)})\ge x_0-\omega(g_{n(m-1)},J(m))+\gamma_m-\sum_{i=m+1}^{\infty}2(i+3)\gamma_i\ge x_0+\varepsilon(m).
\end{equation*}
At the same time $[x^{n(m)}_{i(j)},x^{n(m)}_{i(j+1)}]\subset J(m)=[x_0-\varepsilon(m),x_0+\varepsilon(m)]$, hence the function $f$ has an $(m+2)$-horseshoe (created by the points $x^{n(m)}_{i(1)},x^{n(m)}_{i(2)},\dots,x^{n(m)}_{i(m+3)}$) on the interval $J(m)$. It means that $\htop(f)\ge \log(m+2)$ and $m$ can be arbitrarily large.
\end{proof}

\begin{theorem}\label{thm:A}
There is a universal Banach space $\A\subset C_b(\R)$ such that $\htop(f) = \infty$ for every non-zero $f$ from $\A$.
\end{theorem}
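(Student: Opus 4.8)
The plan is to reduce the statement, via the Banach--Mazur theorem, to the construction of a single linear isometric embedding $\Phi\colon C([0,1])\to C_b(\R)$ all of whose nonzero images have infinite topological entropy. Once such a $\Phi$ is in hand, $\A:=\Phi(C([0,1]))$ is the isometric image of a complete space, hence a closed (so Banach) subspace of $C_b(\R)$; being isometric to a universal space it is itself universal, and by construction $\htop(f)=\infty$ for every nonzero $f\in\A$. So everything reduces to designing $\Phi$.

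To build $\Phi$ I would split $\R$ into a \emph{base} region, say $[1,2]$ (extended to $C_b(\R)$ by $\Ex$), which carries a faithful copy of $g$ and guarantees both injectivity and $\|\Phi(g)\|_\infty=\|g\|_\infty$, together with a sequence of shrinking \emph{gadget windows} $W_m$ accumulating at the point $0$. The point $0$ is the natural centre, since for a linear $\Phi$ the only $g$-independent fixed point compatible with $\Phi(g)(0)=0$ is $0$ itself. Fix a dense sequence $(t_m)_m$ in $[1,2]$, scalars $0<\theta_m\le 1$ with $\theta_m\to 0$ as $W_m\to 0$, and fixed profiles $\psi_m$ supported in $W_m$ with $\|\psi_m\|_\infty=1$, $\psi_m(0)=0$, having at least $m$ full laps reaching $\pm 1$. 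On $W_m$ let $\Phi(g)$ equal $\theta_m g(t_m)\,\psi_m$; this is linear in $g$ and of norm $\le\theta_m\|g\|_\infty\le\|g\|_\infty$, so $\Phi$ is an isometry, and the amplitudes $\theta_m|g(t_m)|\le\theta_m\|g\|_\infty\to 0$ make $\Phi(g)$ continuous at $0$. Exactly as in the proof of Theorem~\ref{thm:E}, if the amplitude $\theta_m|g(t_m)|$ exceeds the domain scale $\rho_m$ (the distance from $0$ within which the laps of $W_m$ sit), then consecutive lap-endpoints have images alternating below $-\rho_m$ and above $+\rho_m$ while lying themselves in $[-\rho_m,\rho_m]$; the intermediate value theorem then yields an $m$-horseshoe for the self-map $\Phi(g)$ on the range interval $[-\rho_m,\rho_m]\subset\overline{\Phi(g)(\R)}$, whence $\htop(\Phi(g))\ge\log m$ by Proposition~\ref{prop:A}.

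To make this fire for \emph{every} nonzero $g$, I would populate the windows, by a standard enumeration of the countably many pairs, so that for each basic subinterval $B$ of $[1,2]$ and each dyadic threshold $2^{-j}$ there are windows with $t_m\in B$, lap-count tending to infinity, and $\rho_m/\theta_m<2^{-j}$. Given $g\neq 0$, continuity yields a basic subinterval $B$ and $a>0$ with $|g|\ge a$ on $B$; choosing $j$ with $2^{-j}<a$, the corresponding windows satisfy $\theta_m|g(t_m)|\ge\theta_m a>\theta_m 2^{-j}>\rho_m$, so their oscillations genuinely cover $[-\rho_m,\rho_m]$ and produce horseshoes of unbounded order, giving $\htop(\Phi(g))=\infty$.

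The main obstacle is the tension between isometry and the horseshoe condition: isometry caps every gadget amplitude by $\|g\|_\infty$, so amplitudes cannot be inflated to beat the window width, and continuity at $0$ even forces $\theta_m|g(t_m)|\to 0$. The resolution is the asymmetric shrinking $\rho_m\ll\theta_m$ coupled with the requirement amplitude~$>\rho_m$, which is precisely what makes the lap-endpoint images straddle $[-\rho_m,\rho_m]$. Turning this into a proof requires the same bookkeeping as in Theorem~\ref{thm:E}: one must control the oscillation contributed near $W_m$ both by the base part and by all other windows (a tail of the form $\sum_{i>m}\theta_i\|g\|_\infty$), choosing the window data inductively so these contributions stay well below $\rho_m$. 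This is where the careful $\gamma_m$/$\varepsilon(m)$-style estimates of the preceding proof reappear, now driven by the data $(g(t_m))_m$ rather than by the $\ell_1$-structure.
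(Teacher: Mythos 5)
Your proposal is correct, and its skeleton --- reduce via Banach--Mazur to constructing one linear isometry $\Phi\colon C([0,1])\to C_b(\R)$ all of whose nonzero images have infinite entropy, realized by gadgets accumulating at $0$ whose amplitudes decay more slowly than their distance to $0$ --- is exactly the paper's. The mechanism inside the gadgets, however, is genuinely different. The paper places a rescaled copy of $f$ \emph{itself} in each window $I_n=[\frac34 p_n,\frac54 p_n]$, with amplitude $q_n$ satisfying $q_n/p_n\to\infty$ and $q_0=1$, so the first window doubles as your ``base region'' and no sampling is needed; its horseshoes are \emph{inter}-window: once $q_n\|f\|>p_{n-d}$, each of $J_{n-d+1},\dots,J_n$ maps over all of them (the mirror extension $g(-y)=g(y)$ handles the case where the extremum of $f$ has the inconvenient sign). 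You instead put a fixed many-lapped profile $\psi_m$ in each window, modulated by the point evaluation $\theta_m g(t_m)$, and your horseshoes are \emph{intra}-window: the laps of a single gadget map over that gadget once $\theta_m|g(t_m)|>\rho_m$. This costs you two things the paper avoids --- a separate base copy to secure the isometry, and the enumeration over pairs (basic interval, dyadic threshold) to guarantee some sampled value is bounded below, where the paper needs only $\|f\|>0$ --- but it buys you freedom from the sign issue, since your profiles reach both $+1$ and $-1$. Two corrections to your write-up. First, your closing worry that Theorem~\ref{thm:E}-style $\gamma_m/\varepsilon(m)$ oscillation bookkeeping is needed is unfounded: because your windows have pairwise disjoint supports and the base sits on $[1,2]$, the restriction of $\Phi(g)$ to $W_m$ is \emph{exactly} $\theta_m g(t_m)\psi_m$, with no tail contribution from other gadgets, so the lap endpoints map exactly to $\pm\theta_m|g(t_m)|$ and Proposition~\ref{prop:A} applies at once; your situation is strictly simpler than in Theorem~\ref{thm:E}, where the $f_i$ are not localized. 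Second, extending the base copy by $\Ex$ as in \eqref{e:1} would make $\Phi(g)$ constant on $[0,1]$ and overwrite the windows; you must instead interpolate the base copy linearly down to $0$ across a buffer interval, which remains linear in $g$ and preserves the norm, so the slip is harmless but should be fixed.
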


\begin{proof}
Take $p_n = 2^{-n}$ for $n \geq 0$ and $\{q_n\}_{n \geq 0}$ a decreasing sequence such that $q_0=1$, $q_n \geq p_n$
for all $n$, $q_n / p_n \to \infty$, but $q_n \to 0$.
Choose intervals $I_n = [\frac34p_n, \frac54p_n]$ and $J_n = (\frac23p_n, \frac43p_n) \supset I_n$,
both `centered' at $p_n$. Notice also that the $J_n$'s are adjacent:
$\frac23p_n$ is the common boundary point of $J_n$ and $J_{n+1}$.
Now for a function $f\in C([0,1])$, construct $g := \Psi(f) \in C_b(\R)$ as follows, see Figure~\ref{fig1}:
$$
g(y) = \left\{ \begin{array}{ll}
0 & \text{ if } y = 0;\\[2mm]
q_n  \cdot f(\frac{2y}{p_n} - \frac32) & \text{ if } y \in I_n \text{
  for some } n \geq 0;\\[2mm]
0 & \text{ if } y \in \cup_n \partial J_n;\\[2mm]
0 & \text{ if } y \geq\frac43;\\[2mm]
\text{by linear interpolation} & \text{ if } y \in \cup_n (J_n \setminus I_n);\\[2mm]
g(-y) & \text{ if } y < 0;\\[2mm]
\end{array} \right.
$$
Let $\A = \Psi(C([0,1]))\subset C([-\frac43,\frac43])$ equipped with the norm ($q_0=1$)
$$\sup_{y \in \R} |g(y)|=\| g \| = \sup_{y \in I_0} |g(y)|=\|f\|,$$
so $\Psi$ is an isometrical isomorphism and $\A$ is a separable Banach space.

If $f$ is not constant zero, then $g = \Psi(f)$ is not constant zero either and
$$\sup_{y \in I_n} |g(y)| =q_n\|f\|> 0.$$
Fix $d \in \N$ arbitrary. Since $q_n/p_n =q_n/2^{-n}\to \infty$, there is an $n \in \N$ such that
$$q_n\|f\| > 2^{-n+d}=p_{n-d}.$$
Since $\{q_i\}_i$ is decreasing and $g(\pm \partial J_i) = 0$ for all $i$ (where $-J_i = \{ y : -y \in J_i\}$),
it follows that $g(I_i) = g(-I_i) \supset [0, \max J_{n-d+1}]$ or $[-\max J_{n-d+1}, 0]$ for all
$n-d+1 \leq i \leq n$.
Hence, within the intervals $J_{n-d+1}, \dots, J_n$, or within $-J_{n-d+1}, \dots, -J_n$,
we can choose $d$ intervals that form a $d$-horseshoe. This implies that $\htop(g) \geq \log d$. As $d$ was arbitrary, $\htop(g) = \infty$.

For a real, separable Banach space $\B$ we will find an isometrical isomorphism $\Phi:\B \to \A$. Since by the Banach--Mazur Theorem the space $C([0,1])$ is universal, there is an isometrical isomorphism $\tilde\Phi:\B \to C([0,1])$. Using the above constructed isometrical isomorphism $\Psi:C([0,1]) \to \A$, the required $\Phi$ is just $\Psi \circ \tilde \Phi$.
\end{proof}
\vspace{2pt}
\begin{figure}
\begin{center}\epsfig{file=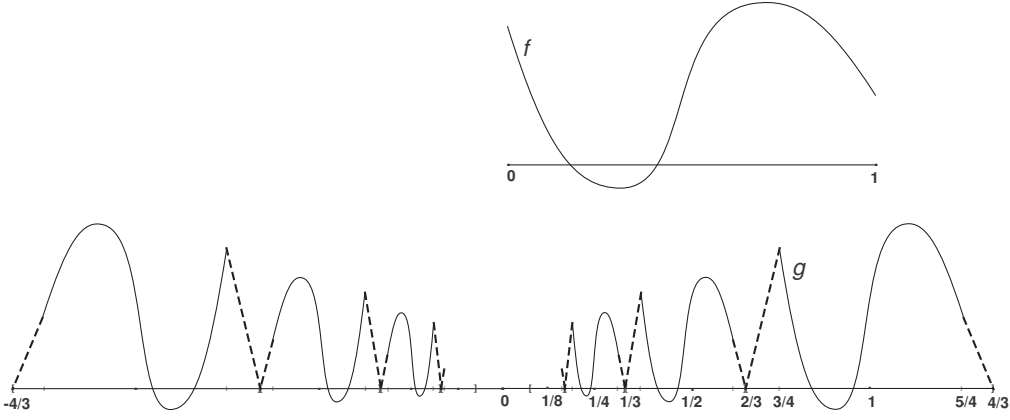,width=14cm}\end{center}
\caption{The maps $f\in C([0,1])$ and $\Psi(f)=g\in C([-\frac43,\frac43])$, $p_n=(\frac{1}{2})^n$, $q_n=(\frac{2}{3})^n$, $n\ge 0$.}\label{fig1}
\end{figure}
\vskip3mm
\begin{remark}
Recall that $f\in C^{\alpha}(\R)$ ($f$ is {\em $\alpha$-H\"older on $\R$}) for some $\alpha \in (0,1)$
if
$$
\sup\left\{\frac{|f(x)-f(y)|}{|x-y|^\alpha}\colon~x,y\in \R, ~0<\vert x-y\vert\le 1\right\}< \infty.
$$
For some fixed $\alpha\in (0,1)$, if we choose $q_n = p_n^\alpha$ and $f \in C^\alpha([0,1])$, then
$\Psi(f)$ is $\alpha$-H\"older on $\R$.
Therefore $\A^\alpha := \Psi(C^\alpha([0,1])) \subset C^\alpha_b(\R)$ is a normed (infinite dimensional) linear space such that $\htop(f) = \infty$ for every non-zero $f$ from $\A^\alpha$.
\end{remark}

\section{Entropy of one-dimensional Banach spaces}

Even if $\dim(\B) = 1$, it is still possible that
$\htop^+(\B) = \infty$. As the following example shows, the upper bound for the entropy need not be attained.

\begin{example}
Let $\B$ be spanned by $f(x) = \sin x$, then $\lambda f$ admits
a $d$-horseshoe whenever $|\lambda| \geq 2\pi d$. Therefore $\htop^+(\B) = \infty$.
\end{example}

The above example also shows that there is no sensible upper bound for $\htop^+(\B)$ in terms of $\dim(\B)$ only. However, $\htop^-(\B)=0$ - see Definition~\ref{def:B}.

In this section we will be investigating the equality $\htop^-(\B)=\htop^+(\B)$ for one-dimensional subspaces $\B$ of $C_b(\R)$: so far we know that for some $\B$,
\begin{itemize}\item $\htop^-(\B)=\htop^+(\B)=\infty$ (easy consequence of Theorem~\ref{thm:A})
\item $\htop^-(\B)=\htop^+(\B)=0$ ($\B$ is spanned by a monotone map)
\end{itemize}

The following statement shows that the entropy can behave extremely rigidly on a one-dimensional subspace of $C_b(\R)$.

\begin{theorem}\label{thm:D}
For any $t \in [0, \infty]$, there exists a function $f \in C_b(\R)$
such that for $\B = \{\lambda f\}_{\lambda\in\R}$ satisfies $\htop^-(\B) = \htop^+(\B)=t$.
\end{theorem}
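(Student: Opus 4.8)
The plan is to exploit that $\B=\{\lambda f\}_{\lambda\in\R}$ is one-dimensional: the quantities $\htop^-(\B)$ and $\htop^+(\B)$ are exactly the infimum and supremum of $\htop(\lambda f)$ over $\lambda\neq 0$, so the desired equality $\htop^-(\B)=\htop^+(\B)=t$ is equivalent to the single requirement
\[
\htop(\lambda f)=t\qquad\text{for every }\lambda\neq 0 .
\]
Thus I must produce one function whose topological entropy is \emph{invariant under nonzero vertical rescaling} and equal to the prescribed value $t$. I would treat the three regimes $t=0$, $t=\infty$, and $t\in(0,\infty)$ separately.

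For $t=0$ it suffices to take any bounded strictly monotone $f$ (for instance a bounded strictly increasing function): every $\lambda f$ is monotone, hence admits no horseshoe and $\htop(\lambda f)=0$. For $t=\infty$ I would reuse the map $g=\Psi(f_0)$ built in the proof of Theorem~\ref{thm:A} for a fixed nonzero $f_0\in C([0,1])$. The argument there produced, for each $d$, a $d$-horseshoe out of the windows $I_n$, using only that $\{q_n\}$ is decreasing and that $q_n/p_n\to\infty$. Replacing $g$ by $\lambda g$ merely multiplies all amplitudes $q_n\|f_0\|$ by $|\lambda|$, which affects neither the monotonicity of $\{q_n\}$ nor the divergence $q_n/p_n\to\infty$; hence the same horseshoes appear after discarding finitely many windows, and $\htop(\lambda g)=\infty$ for every $\lambda\neq 0$.

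The substantial case is $t\in(0,\infty)$, where I would build $f$ from a \emph{self-similar cascade of horseshoes accumulating at a fixed point} $x_0$. First fix a model: for every $t\in(0,\infty)$ there is a piecewise monotone map $h$ on an interval with $\htop(h)=t$ exactly (for $t=\log N$ a full $N$-horseshoe; in general a map with prescribed lap-number growth, whose entropy equals $t$ by the Misiurewicz--Szlenk formula, cf.\ Proposition~\ref{prop:A} and \cite[Theorem 4.2.4]{alm00}). I would then place geometrically shrinking windows $I_n\downarrow x_0$ and define $f$ on each $I_n$ as a rescaled (in domain and range) copy of $h$, interpolating by the constant value $f(x_0)$ on the gaps between consecutive windows, exactly as the buffer values $g(\partial J_n)=0$ are used in Theorem~\ref{thm:A}. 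The window sizes and amplitudes are tuned so that the cascade is self-similar: multiplying $f$ by $\lambda$ and reading off the dynamics at the appropriately shifted scale reproduces, up to a homeomorphism, the same local dynamical picture. Consequently, for each $\lambda\neq 0$ the map $\lambda f$ still exhibits a copy of $h$ as a return/sub-dynamics on a suitable window, which gives the lower bound $\htop(\lambda f)\ge\htop(h)=t$ via Proposition~\ref{prop:A}.

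The heart of the matter, and the step I expect to be hardest, is the matching \emph{uniform upper bound} $\htop(\lambda f)\le t$ for all $\lambda$ at once. Although $f$ has infinitely many laps, the constant buffer zones must be arranged so that the horseshoes living in distinct windows \emph{cannot chain together}; this is precisely the feature that fails in Theorem~\ref{thm:A}, where the stacking of windows forces the entropy up to $\infty$. Quantitatively, I would bound the number of laps of $(\lambda f)^k$ by showing that each itinerary, once it enters a window, stays trapped in a single rescaled copy of $h$, so that $\limsup_k \tfrac1k\log(\text{lap number of }(\lambda f)^k)\le t$ uniformly in $\lambda$ by the lap-growth bound of \cite[Theorem 4.2.4]{alm00}. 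The second delicate point is realizing \emph{arbitrary} $t\in(0,\infty)$, not merely $t=\log N$, which is why the model $h$ is taken with prescribed lap-number growth and transplanted scale-invariantly; combined with the lower bound this yields $\htop(\lambda f)=t$ for every $\lambda\neq 0$, and hence $\htop^-(\B)=\htop^+(\B)=t$, as required.
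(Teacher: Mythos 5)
Your reduction of the theorem to the single statement $\htop(\lambda f)=t$ for every $\lambda\neq 0$, and your treatment of $t=0$ (span of a monotone map) and $t=\infty$ (reusing $g=\Psi(f_0)$ from Theorem~\ref{thm:A}), are correct and agree with the paper. The gap is in the case $t\in(0,\infty)$, at exactly the two places where you invoke ``self-similarity''. Vertical rescaling is not a topological conjugacy. In a cascade where orbits are to be \emph{trapped} in a window $I_n$, the copy of the model $h$ in $I_n$ must have its range scaled like its domain, and then $\lambda f|_{I_n}$ is conjugate to $\lambda h$, not to $h$; conjugating $\lambda f$ by a linear map only moves $\lambda$ along the discrete multiplicative group generated by the similarity ratio of the cascade. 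So for all $\lambda$ outside a countable set, $\lambda f$ contains copies of $\mu h$ with $\mu\neq 1$ and no copy of $h$ itself. This breaks your lower bound (no embedded copy of $h$ is available), and it breaks your upper bound outright: an orbit trapped in one window is governed by some $\mu h$, and $\mu\mapsto\htop(\mu h)$ is in general not constant and not maximized at $\mu=1$ (for a quadratic-like model it increases with $\mu$), so trapping only gives $\htop(\lambda f)\le\sup_\mu\htop(\mu h)$, which can be strictly larger than $\htop(h)=t$. Normalizing the model by $\htop(h)=t$ is the wrong normalization, and no lap-counting argument can be ``uniform in $\lambda$'' as long as the local models are the vertically rescaled maps $\mu h$.

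The paper's proof supplies precisely the two ideas you are missing. First, the model $\Theta$ is chosen so that not its entropy but the \emph{supremum over all vertical scalings} equals $t$: for a family $\theta_a$ of at most $d$-modal maps ($d>e^t$ odd) interpolating between the identity and a full $d$-horseshoe, the function $r_a=\sup_{\lambda\ge 0}\htop(\lambda\theta_a)$ is continuous in $a$ by continuity of entropy in the $C^1$ fixed-modality topology \cite[Cor.~4.5.5]{alm00}, with $r_0=0$ and $r_1=\log d>t$, so the intermediate value theorem gives $a^*$ with $r_{a^*}=t$; set $\Theta=\theta_{a^*}$. With this normalization the upper bound $\htop(\lambda f)\le t$ holds for \emph{all} $\lambda$ simultaneously and trivially, since every window of $\lambda f$ is a copy of some $\mu\Theta$. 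Second, the windows do not carry identical copies: $f|_{I_n}$ is a copy of $\lambda_n\Theta$, where $\{\lambda_n\}$ enumerates the positive rationals. Multiplication by $\lambda$ sends this dense set of parameters to the dense set $\{\lambda\lambda_n\}$, so some window of $\lambda f$ is a copy of $\lambda\lambda_m\Theta$ with $\lambda\lambda_m$ arbitrarily close to a near-maximizing parameter $\lambda^*$, and continuity of $\mu\mapsto\htop(\mu\Theta)$ gives $\htop(\lambda f)\ge t-2\varepsilon$ for every $\varepsilon>0$. In short, invariance under all rescalings is achieved by \emph{density} of the scaling parameters together with continuity of entropy, not by geometric self-similarity; that is the mechanism you would need to substitute for your conjugacy claim.
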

\begin{proof}
The case $t = 0$ and $t = \infty$ were covered previously,
so let $t \in (0, \infty)$ arbitrary and take an odd integer $d > e^t$.

Let $\theta_a:[0, \infty) \to [0, \infty)$ be a one-parameter family (with
$a \in [0,1]$) of at most $d$-modal continuous  maps
such that for each $a \in [0,1]$, $\theta_a([9,10])\subset [9,10]$ and $\theta_a(x) = x$ whenever $x \notin (9,10)$,
 $\theta_0$ is the identity, and $\theta_1$ has a full
$d$-horseshoe on $[9,10]$.
In the $C^1$ topology for
maps of fixed modality, topological entropy depends continuously on the map,
see \cite[Cor.\ 4.5.5]{alm00}, so there is no loss in generality 
in assuming that $\htop(\lambda \cdot \theta_a)$
is continuous in both $a \in [0,1]$ and $\lambda \in [\frac{9}{10}, \frac{10}{9}]$. (Note that $\htop(\lambda \cdot \theta_a) \equiv 0$ for $\lambda \ge 0$ 
outside this interval.)
Therefore $r_a = \sup_{\lambda \geq 0} \htop(\lambda \cdot \theta_a)$ is 
is continuous in $a$ as well, and $r_0 = 0$, $r_1 = \log d > t$.
Therefore there is $a^*$ such that $r_{a^*} = t$.
Fix $\Theta = \theta_{a^*}$.

Next let $\{ \lambda_i \}_{i \geq 0}$ be a denumeration of the positive
rationals such that $\lambda _1 = 1$ and
\begin{equation}\label{eq:lambdas}
\lambda_{n+1}\le 2\lambda_n \quad \text{ for all } n \geq 0.
\end{equation}
Let $x_n = 4^{-n}$ and $I_n = [0.9 x_n, x_n]$ for $n \geq 0$. Now we set
$$
f(x) = \left\{ \begin{array}{ll}
\lambda_n \cdot \frac{x_n}{10} \cdot \Theta(\frac{10}{x_n} \cdot x) & \text{ if } x \in I_n;\\[2mm]
0 & \text{ if } x = 0;\\[2mm]
10 & \text{ if } x \geq 10;\\[2mm]
\text{by linear interpolation} & \text{ if } x \in (0,10) \setminus \cup_n I_n;\\[2mm]
f(-x) & \text{ if } x < 0.\\[2mm]
\end{array} \right.
$$
Fix $\lambda > 0$.
By assumption \eqref{eq:lambdas} we have that $\lambda f(x) \leq \lambda f(y)$
for all $x \in I_{n+1}$, $y \in I_n$ and $n \geq 0$.
It is not hard to see that every orbit with respect to $\lambda f$ can visit only finitely many
intervals $I_n$, and at most one of them infinitely often.
Therefore, if we choose some $x > 0$, then
$\omega(x)$ can only belong to a single $I_n$, and only if the
diagonal intersects the box $I_n \times \lambda f(I_n)$.
By our choice of $a^*$ (and hence $\Theta$), $\htop(\lambda f|_{I_n}) \leq t$.
Since $x \geq 0$ is arbitrary, $\htop(\lambda f) \leq t$.

For $\varepsilon>0$ let $\lambda^*$ satisfy $ \htop(\lambda^* \Theta)\ge t-\varepsilon$. Since $\{\lambda_n\}_{n \geq 0}$ is dense in $[0, \infty)$ there is some interval $I_m$ such that $\lambda_m\lambda$ is sufficiently close to $\lambda^*$ hence $\htop(\lambda_m\lambda \Theta)\ge t-2\varepsilon$ and also $\htop(\lambda f\vert_{I_m})\ge t-2\varepsilon$. This shows that $\htop(\lambda f) \geq t$, and so we have
$\htop(\lambda f) = t$.

Finally, the dynamics of $-\lambda f$ on $(-\infty, 0]$ is conjugate to the
dynamics of $\lambda f$ on $[0,\infty)$, so also
$\htop(-\lambda f) = t$.
\end{proof}


\begin{thebibliography}{999}

\bibitem{alm00}  L.\ Alsed\`a, J.\ Llibre, M.\ Misiurewicz,
{\em Combinatorial dynamics and the entropy in dimension one}, Adv.\
Ser.\ in Nonlinear Dynamics 5, World Scientific, Singapore, 1993.

\bibitem{Ba32} S.\ Banach,
{\em Th\'eorie des op\'erations lin\'eaires},
PWN, Warsaw, 1932.

\bibitem{He00} S.\ Hencl, 
{\em Isometrical embeddings of separable Banach spaces into the set of nowhere approximately differentiable and nowhere H\"{o}lder functions}, 
Proc.\ Amer.\ Math.\ Soc.\ {\bf 128} (2000), 3505-3511.

\bibitem{LM40} B.\ Levine, D.\ Milman,
{\em On linear sets in space $C$ consisting of functions of bounded variation}, Comm.\ Inst.\ Sci.\ Math.\ M\'ec.\ Univ.\ Kharkoff {\bf 16} (1940), 102-105.

\bibitem{MS} M.\ Misiurewicz, W.\ Szlenk,
{\em Entropy of piecewise monotone mappings,}
Studia Math.\ {\bf 67}(1) (1980), 45-63.

\bibitem{PT84} P.\ P.\ Petrushev, S.L.\ Troyanski,
{\em On the Banach Mazur theorem on the universality of $C([0,1])$},
C.\ R.\ Acad.\ Bulgare Sci.\ {\bf 37} (1984), 283-285. (Russian)

\bibitem{Ro95} L.\ Rodr\'\i guez-Piazza,
{\em Every separable Banach space is isometric to a space of continuous nowhere differentiable functions},
Proc.\ Amer.\ Math.\ Soc.\ {\bf 123} (1995), 3649-3654.

\end{thebibliography}
\end{document}